\renewcommand{\emptyset}{\varnothing}
\DeclareMathAlphabet{\mathbbold}{U}{bbold}{m}{n}
\newcommand{\Id}{I}
\newcommand{\R}{\mathbb R}
\newcommand*\diff{\mathop{}\!\mathrm{d}}
\newcommand{\xhat}{\hat{x}}
\newcommand{\Xhat}{\hat{X}}
\newcommand{\eps}{\varepsilon}
\newcommand{\Eps}{E}
\newcommand{\D}{\mathcal{D}}
\renewcommand{\L}{\mathcal{L}}
\renewcommand{\le}{\leqslant}
\renewcommand{\leq}{\leqslant}
\renewcommand{\geq}{\geqslant}
\newcommand{\HH}{\mathcal{H}}
\setlist[enumerate]{align=left}
\newtheorem{thm}{Theorem}
\newtheorem{cor}[thm]{Corollary}
\newtheorem{lem}[thm]{Lemma}
\theoremstyle{definition}
\newtheorem{assum}[thm]{Assumption}
\newtheorem{exmp}[thm]{Example}
\newtheorem{rem}[thm]{Remark}
\title{Dynamic Output Feedback Stabilization of Non-uniformly Observable Dissipative Systems}
\date{\today}
\author[1]{Ludovic Sacchelli}
\author[2]{Lucas Brivadis}
\author[2]{Vincent Andrieu}
\author[2]{Ulysse Serres}
\author[3]{Jean-Paul Gauthier}
\affil[1]{Department of Mathematics, Lehigh University, Bethlehem, PA, USA (lus219@lehigh.edu)}
\affil[2]{Univ. Lyon, Universit\'e Claude Bernard Lyon 1, CNRS, LAGEPP UMR 5007, 43 bd du 11 novembre 1918, F-69100 Villeurbanne, France (lucas.brivadis@univ-lyon1.fr).}
\affil[3]{Universit\'e de Toulon, Aix Marseille Univ, CNRS, LIS, France
}
\begin{document}

\maketitle

\begin{abstract}
Output feedback stabilization of control systems is a crucial issue in engineering.
Most of these systems are not uniformly observable, which proves to be a difficulty to move from state feedback stabilization to dynamic output feedback stabilization.
In this paper, we present a methodology to overcome this challenge in the case of dissipative systems by requiring only target detectability. These systems appear in many physical systems and we provide various examples and applications of the result.

\end{abstract}

{\small
Keywords: State observers, Dynamic output feedback, Nonlinear control systems, Unobservable, Feedback stabilization, Asymptotic stability
}

%===============================================================================

\section{Introduction}

A control system is said to be \emph{state-affine} if it is of the form
\begin{equation}\label{E:system}
    \dot x = A(u)x + B(u)
\end{equation}
with $x \in \R^n$ being the state of the system and $u \in \R^m$ being the input.
System \eqref{E:system} is said to be uniformly \emph{dissipative} if there exists a positive definite matrix $P \in \R^{n \times n}$ such that for all inputs $u \in \mathcal{U}\subset\R^m$,
\begin{align}\label{eq:diss}
    PA(u)+A(u)'P\leq0.
\end{align}
Many physical systems satisfy such a dissipativity property.
For example, it is the case for input-state-output port-Hamiltonian systems (see \emph{e.g.} \cite{van2014port}).
Hence, stabilizing the state of such a dissipative state-affine control system to a target point (which we assume to be $0$, without loss of generality) is a crucial issue in engineering.
However, in most cases, only part of the system is measured, namely the output $y$ which we assume to depend linearly on $x$:
\begin{equation}
    y = Cx.
\end{equation}
Hence the stabilization must be achieved using only an output feedback, and not a state feedback.
If a stabilizing state feedback exists, then a common strategy to build a dynamic output feedback is to apply this feedback to an \emph{observer} of the state, which learns the state from the output.
A sufficient condition given in \cite{TeelPraly1994} for this method to work is the \emph{uniform observability} of the system in small time, that is, for all input $u$, the observation of the output $y$ on any time interval is sufficient to uniquely determine the state $x$.

However, as shown in \cite{Gauthier_book}, it is generic for a state-affine system to have inputs that make the system unobservable, called singular inputs.
%\startmodif
%
%\stopmodif
This proves to be a challenge in the context of dynamic output feedback stabilization.
To overcome this issue, some authors have proposed to modify the feedback law in order to get new observability properties while maintaining its stabilizing property (see \emph{e.g.} \cite{Coron1994,ShimTeel2002} for time-varying perturbations or \cite{nous,MarcAurele} for smooth autonomous perturbations).

In this paper, we show for uniformly dissipative systems that local asymptotic state feedback stabilizability and target detectability are sufficient to prove semi-global asymptotic dynamic output feedback stabilizability.
% , without using perturbation of the feedback.
The key point is that target detectability is a much weaker assumption than uniform observability. And we do not follow any perturbation strategy of the feedback law.

In the next section, we state the main result of the paper.
Its proof may be found in Section~\ref{sec:proof}.
In the last Section~\ref{sec:appli}, we provide various examples and applications of the result.

\section{Main result}

\subsection{Problem statement}\label{sec:prob}

% \subsection{Problem statement}

Let $n$, $m$ and $p$ be positive integers, $A:\R^m\to\R^{n\times n}$ and $B:\R^m\to\R^{n}$ be two continuous maps, and $C\in\R^{p\times n}$.
For all $u\in C^0(\R_+,\R^m)$, we consider the following observation-control system:
\begin{equation}\label{E:observation_system}
\left\{
\begin{aligned}
&\dot{x}= A(u)x+B(u)
\\
&y= C x
\end{aligned}
\right.
\end{equation}
where $x$ is the state of the system, $u$ is the input and $y$ is the output.

Then \eqref{E:observation_system} is said to be dissipative for the input $u$ if there exists a positive definite matrix $P\in\R^{n\times n}$ such that \eqref{eq:diss} holds, \emph{i.e.} for all $x\in\R^n$,
$
x' P A(u) x \leq 0.
$
Then, for all $\alpha>0$, consider the following Luenberger observer
\begin{equation}\label{E:observer_u}
\left\{
\begin{aligned}
&\dot{\xhat}= A(u)\xhat
+ B(u)
- \alpha P^{-1}C'C\eps\\
&\dot{\eps}= \left(A(u) - \alpha P^{-1}C'C\right)\eps
\end{aligned}
\right.
\end{equation}
where $\xhat$ denotes the state of the observer and $\eps$ the error between the actual state of \eqref{E:observation_system} and the observer, \emph{i.e.} $\eps=\xhat-x$.
Assuming the local asymptotic stabilizability at $0$ of \eqref{E:observation_system},
we prove the semi-global asymptotic stabilizability at $(0, 0)$ of \eqref{E:observer_u},
% by means of a dynamic output feedback
without uniform observability hypothesis. Instead, we use a detectability hypothesis at the target.

\subsection{Statement of the result}\label{sec:main}

Let $\lambda\in C^0(\R^n, \R^m)$ be a state feedback law. In the rest of the paper, $\lambda$ is fixed and we assume that for all $\alpha>0$, all positive definite $P\in\R^{n\times n}$ and all initial conditions $(\xhat_0, \eps_0)\in\R^n\times\R^n$ the Luenberger observer
\begin{equation}\label{E:observer_system}
\left\{
\begin{aligned}
&\dot{\xhat}= A(\lambda(\xhat))\xhat
+ B(\lambda(\xhat))
- \alpha P^{-1}C'C\eps\\
&\dot{\eps}= \left(A(\lambda(\xhat)) - \alpha P^{-1}C'C\right)\eps.
\end{aligned}
\right.
\end{equation}
has a unique solution $(\xhat, \eps)\in C^0(\R_+, \R^n\times\R^n)$
such that $(\xhat(0), \eps(0)) = (\xhat_0, \eps_0)$ (which is the case when the vector field associated to \eqref{E:observer_system} is locally Lipschitz and forward complete).
Without loss of generality on the system, %one can 
we assume that $\lambda(0)=0$.
In order to answer the issue raised in Section 2, we consider the case of $\lambda$ a locally asymptotically stabilizing state feedback law  of 
\eqref{E:observation_system}.

\begin{assum}[State feedback stabilizability]\label{assum:stab}
There exists a non-empty open domain $\D\subset\R^n$ such that
$0$ is an asymptotically stable equilibrium point with basin of attraction $\D$ of the vector field $f:\R^n\ni x\mapsto A(\lambda(x))x+B(\lambda(x))$.
\end{assum}
As stated earlier, we focus on the class of dissipative systems.
The stabilizing feedback law being fixed, it is sufficient to assume
uniform
dissipativity for inputs that this feedback may yield.
\begin{assum}[Dissipativity]\label{assum:diss}
There exists a positive definite  matrix $P\in\R^{n\times n}$ such that for all
$(x, \xhat)\in\R^n\times\D$
    \begin{equation}
        x' P A(\lambda(\xhat)) x \leq 0.
    \end{equation}
\end{assum}
From now on, \eqref{E:observer_system} is considered only for this specific matrix $P$.
Our goal is to prove the semi-global asymptotic stability of \eqref{E:observer_system} without uniform observability assumption.
We show that the following condition is sufficient.
\begin{assum}[Target detectability]\label{assum:obs}
The pair $(C, A(0))$ is detectable.
\end{assum}
Now we state the main theorem of the paper.
\begin{thm}\label{thmain}
If Assumptions~\ref{assum:stab}, \ref{assum:diss} and \ref{assum:obs} hold,
then for any compact set $K_1\times K_2\subset \D\times \R^n$, there exists $\alpha_0>0$ such that for all $\alpha\in(0, \alpha_0)$, $(0, 0)$ is an asymptotically stable equilibrium point of \eqref{E:observer_system} with basin of attraction containing $K_1\times K_2$.

\end{thm}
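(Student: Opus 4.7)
My approach exploits the cascade-like structure of \eqref{E:observer_system}: the observer error $\eps$ obeys a linear equation in $\eps$, and it enters the $\xhat$-equation only through the additive perturbation $-\alpha P^{-1} C' C \eps$ of the nominal closed loop $\dot\xhat = f(\xhat)$. The crucial first observation is that, thanks to Assumption~\ref{assum:diss}, the quadratic form $V_\eps(\eps) = \eps' P \eps$ satisfies
\begin{equation*}
    \dot V_\eps = \eps'\bigl(P A(\lambda(\xhat)) + A(\lambda(\xhat))' P\bigr) \eps - 2\alpha |C \eps|^2 \leq -2\alpha |C\eps|^2 \leq 0
\end{equation*}
along any trajectory, regardless of $\xhat(t)$. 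Consequently $\eps(t)' P \eps(t) \leq \eps_0' P \eps_0$ for all $t \ge 0$, so $\eps$ stays in a compact set depending only on $K_2$, and the perturbation in the $\xhat$-equation is uniformly bounded by $\alpha\, M(K_2)$ for some constant $M(K_2)$ independent of $\alpha$ and of $t$.

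With this uniform bound, I would treat the $\xhat$-equation as a perturbation of the vector field $f$. Assumption~\ref{assum:stab} together with a converse Lyapunov theorem provides a smooth proper Lyapunov function $V_\xhat$ for $f$ on $\D$, and asymptotic stability is uniform on compact subsets of $\D$. A standard total-stability estimate then yields: for any compact $K_1' \subset \D$ containing $K_1$ in its interior and any neighborhood $U$ of $0$, there exists $\alpha_0 = \alpha_0(K_1, K_2, U) > 0$ such that, for every $\alpha \in (0, \alpha_0)$ and every initial condition $(\xhat_0, \eps_0) \in K_1 \times K_2$, the trajectory satisfies $\xhat(t) \in K_1'$ for all $t \ge 0$ and eventually enters and stays in $U$. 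This gives semi-global practical stabilization of the $\xhat$-component.

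To upgrade this to true asymptotic convergence, observe that integrating $\dot V_\eps \leq -2\alpha|C\eps|^2$ in time gives $\int_0^\infty |C\eps(t)|^2\,dt < \infty$; since $\eps$ and its derivative are bounded, Barbalat's lemma yields $C\eps(t) \to 0$. Once $\xhat(t)$ has been brought into a small neighborhood of $0$, the $\eps$-equation becomes close to the time-invariant system $\dot \eps = (A(0) - \alpha P^{-1} C' C)\eps$. Applying LaSalle's invariance principle to this limit system together with the non-increase of $V_\eps$, every $\omega$-limit point of $\eps$ lies in the largest $A(0)$-invariant subspace of $\ker C$, i.e.\ the unobservable subspace of $(C, A(0))$; by Assumption~\ref{assum:obs}, $A(0)$ is Hurwitz on this subspace, forcing $\eps(t) \to 0$. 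Finally, with $\eps(t) \to 0$, the $\xhat$-equation is an asymptotically vanishing perturbation of $\dot \xhat = f(\xhat)$, and since $\xhat$ is already trapped near the LAS equilibrium, one concludes $\xhat(t) \to 0$. Lyapunov stability of $(0, 0)$ for the coupled system near the origin follows from the joint Lyapunov candidate $V_\xhat + \beta V_\eps$ for a suitable $\beta > 0$.

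The main obstacle is the joint bookkeeping of constants and the avoidance of circularity: $\alpha_0$ must depend on both $K_1$ and $K_2$ because the perturbation in the $\xhat$-equation scales with $\eps_0' P \eps_0$, while the perturbation that $f$ can tolerate depends on how deeply $K_1$ sits in $\D$. Moreover, the convergences $\xhat(t) \to 0$ and $\eps(t) \to 0$ cannot be invoked simultaneously; the two-phase structure above resolves this by first using only the $\xhat$-independent bound on $\eps$ to bring $\xhat$ into a small neighborhood of $0$, and only then letting detectability act on the error dynamics. Making this phased or two-timescale argument precise — possibly through a single joint Lyapunov function handling both regimes — is the technical core of the proof.
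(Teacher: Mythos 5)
Your first half is sound and matches the paper's strategy: dissipativity makes $V_\eps(\eps)=\eps'P\eps$ nonincreasing uniformly in $\xhat\in\D$, so $\eps$ is bounded by data from $K_2$ alone, the coupling term is $O(\alpha)$, and a Lyapunov sublevel-set (or total-stability) argument traps $\xhat$ in a compact subset of $\D$ for $\alpha$ small — this is exactly Lemma~\ref{lem:bound}. The genuine gap is in your convergence step, and it is precisely the circularity you flag at the end but do not resolve. Practical stability only puts $\xhat(t)$ in a small neighborhood $U$ of $0$; it does not give $\xhat(t)\to0$, hence $\lambda(\xhat(t))$ does not converge to $0$ and the error dynamics $\dot\eps=\bigl(A(\lambda(\xhat(t)))-\alpha P^{-1}C'C\bigr)\eps$ is \emph{not} an asymptotically autonomous perturbation of $\dot\eps=(A(0)-\alpha P^{-1}C'C)\eps$. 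LaSalle applied to that frozen "limit system" is therefore not legitimate: from $C\eps\to0$ and detectability of the single pair $(C,A(0))$ you cannot conclude $\eps\to0$ for a time-varying system whose unobservable subspace may move with $\lambda(\xhat(t))$, even if each frozen pair is detectable. Assumption~\ref{assum:obs} is only usable once the $\xhat$-component is pinned to exactly $0$.

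The paper closes this loop with a nested $\omega$-limit set argument on the \emph{joint} system that you should adopt: by LaSalle, $\omega(\xhat_0,\eps_0)\subset\{C\Eps(\cdot,\xhat_1,\eps_1)\equiv0\}$; on this invariant set the coupling term vanishes identically, so the $\xhat$-component of any limit trajectory evolves under the unperturbed $f$ and (staying in a compact subset of $\D$ by boundedness) converges to $0$ by Assumption~\ref{assum:stab}. Taking the $\omega$-limit of such a limit trajectory produces a point $(0,\eps_2)\in\omega(\xhat_0,\eps_0)$, at which $\Xhat\equiv0$ and the error satisfies exactly $\dot\eps=A(0)\eps$ with $C\eps\equiv0$; only here does detectability give $\eps\to0$, so $(0,0)\in\omega(\xhat_0,\eps_0)$, and local asymptotic stability finishes. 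On that last point, your joint candidate $V_{\xhat}+\beta V_\eps$ is also under-justified: $V_\eps$ decays only in the $C\eps$ directions and $A(0)$ may have purely imaginary eigenvalues, so the paper instead establishes local asymptotic stability by showing $A(0)-\alpha P^{-1}C'C$ is Hurwitz (Lyapunov plus LaSalle plus detectability) and then invoking the center manifold theorem for the triangular linearization.
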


\subsection{Discussion on the result}

    \begin{rem}[on Assumption~\ref{assum:diss}]
    Dissipativity of the system is the key point of the result. It implies that the function $V:\eps\mapsto\eps'P\eps$ is a Lyapunov function for the $\eps$-subsystem of \eqref{E:observer_system} as long as $\xhat\in\D$. Indeed,
    \begin{align*}
    \frac{\diff W(\eps)}{\diff t}
    &= \eps'P\dot\eps + \dot\eps'P\eps\nonumber\\
    &= \eps'\left(P A(\lambda(\xhat)) + A(\lambda(\xhat))'P\right) \eps - 2\alpha \eps'C'C\eps\nonumber\\
    &\leq - 2\alpha |C\eps|^2 \tag{by Assumption~\ref{assum:diss}}\\
    &\leq0.
    \end{align*}
    Numerous physical systems satisfy this dissipativity property.
    We provide examples in the last Section~\ref{sec:appli}.
    
    \end{rem}
    
    \begin{rem}[on Assumption~\ref{assum:obs}]
    Let $A_0=A(0)$.
    It is well-known (see \emph{e.g} \cite{sontag}) that the detectability of the pair $(C, A_0)$ is equivalent to the fact that the solutions of the linear dynamical system $\dot\omega=A_0\omega$ are such that, if $C\omega\equiv0$, then $\omega\to0$.
    Note that the set of pairs $(C, A_0)$ that are detectable is open and dense in $\R^{m\times n}\times\R^{n\times n}$.
    \end{rem}

\begin{rem}[on Theorem~\ref{thmain}]
The key point of the theorem is that it does not rely on any \emph{uniform observability} assumption.
Instead, a dissipativity property of the system is required, as well as target detectability.

\end{rem}
Note that taking $p=n$ and $C$ the identity matrix, Assumption~\ref{assum:obs} is trivially satisfied, and then Theorem~\ref{thmain} implies the following corollary, which is an interesting result in itself about the stabilization of dissipative systems.
\begin{cor}
Any locally asymptotically stabilizable dissipative state-affine system is also semi-globally asymptotically stabilizable by means of a dynamic feedback.
\end{cor}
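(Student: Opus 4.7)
The plan is to apply the Krasovskii--LaSalle invariance principle to the coupled system \eqref{E:observer_system}, using $V(\eps)=\eps'P\eps$ as a non-strict Lyapunov function for the \emph{full} cascade. By Assumption~\ref{assum:diss}, $\dot V\leq -2\alpha|C\eps|^2$ whenever $\xhat\in\D$, and crucially $V$ does not depend on $\xhat$. The heart of the proof is to show that the largest invariant subset of $\{\dot V=0\}$ reduces to $\{(0,0)\}$: Assumptions~\ref{assum:stab} and \ref{assum:diss} will collapse the $\xhat$-component of the limit set, and only at that stage does Assumption~\ref{assum:obs} enter, to collapse the $\eps$-component as well.

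The first step is to secure forward invariance of a compact region $\Lambda\times\{|\eps|\leq R\}$ with $\Lambda\Subset\D$, $K_1\subset\Lambda$, and $R$ depending only on $K_2$ and $P$. Under Assumption~\ref{assum:stab}, Kurzweil's converse Lyapunov theorem yields a smooth Lyapunov function $V_1\colon\D\to\R_+$ for the nominal closed-loop $\dot x=f(x):=A(\lambda(x))x+B(\lambda(x))$, whose sublevel sets exhaust $\D$. Take $\Lambda$ to be such a sublevel set with $K_1$ strictly inside. As long as $\xhat\in\Lambda$, $V(\eps)$ is non-increasing, so $|\eps(t)|\leq R$, and the perturbation $-\alpha P^{-1}C'C\eps$ of the $\xhat$-equation is uniformly bounded by a constant times $\alpha$. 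On the compact annular region between $K_1$ and $\partial\Lambda$, $\nabla V_1\cdot f$ is bounded above by a strictly negative constant, so for $\alpha_0$ sufficiently small this decrease dominates the perturbation and $\xhat$ never reaches $\partial\Lambda$.

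LaSalle's invariance principle on the resulting compact positively invariant set then gives convergence to the largest invariant subset $\Omega$ of $\{\dot V=0\}$. Writing $\dot V=\eps'(PA(\lambda(\xhat))+A(\lambda(\xhat))'P)\eps-2\alpha|C\eps|^2$ as a sum of two nonpositive terms, both must vanish identically on trajectories in $\Omega$; in particular $C\eps\equiv 0$ there. The correction in the $\xhat$-equation thus vanishes, and on $\Omega$ the $\xhat$-component evolves under the nominal $f$, making the projection $\pi_1(\Omega)\subset\D$ a compact invariant set of $f$. The strict decrease of $V_1$ along $f$ then forces $\pi_1(\Omega)=\{0\}$ (the maximum of $V_1$ on such a set can only be attained at $0$), so $\xhat\equiv 0$ on $\Omega$. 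It remains to bring in Assumption~\ref{assum:obs}: on these trajectories the $\eps$-equation reduces to $\dot\eps=A(0)\eps$ (since $C\eps\equiv 0$ kills the correction), and differentiating $C\eps\equiv 0$ repeatedly yields $CA(0)^k\eps\equiv 0$ for all $k\geq 0$, placing $\eps(t)$ in the unobservable subspace $\mathcal{N}$ of $(C,A(0))$. Detectability makes $A(0)|_{\mathcal{N}}$ Hurwitz, hence $\eps(t)\to 0$; but $V(\eps)$ is constant on $\Omega$, which forces $\eps\equiv 0$. Thus $\Omega=\{(0,0)\}$, giving attractivity with basin containing $K_1\times K_2$. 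Lyapunov stability at $(0,0)$ is obtained by applying the same forward-invariance argument to small sublevel sets of $V_1$ and of $V$.

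The main obstacle is the forward-invariance step. The perturbation $-\alpha P^{-1}C'C\eps$ is persistent rather than vanishing in finite time, and the bound on it that makes it of order $\alpha$ relies on the non-increase of $V(\eps)$, which is itself conditional on $\xhat\in\D$. Closing this potential circularity---through a quantitative small-gain argument on sublevel sets of $V_1$---is what forces $\alpha_0$ to depend on the compact initial set $K_1\times K_2$ and gives the theorem its semi-global (rather than global) character.
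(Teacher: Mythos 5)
Your argument is correct, and it in fact re-derives Theorem~\ref{thmain} in full (the corollary is just the specialization $p=n$, $C=\Id$, for which Assumption~\ref{assum:obs} is automatic --- the paper obtains it by that one-line remark rather than by a fresh proof). Your boundedness step is the same small-gain argument as the paper's Lemma~\ref{lem:bound}: converse Lyapunov function for $f$, $|\eps|$ controlled by the non-increase of $V$, and $\alpha_0$ chosen so that the $O(\alpha)$ perturbation cannot overcome the strict decrease of $V_1$ on $\partial\Lambda$, with the same mild circularity resolved by a first-exit-time argument. Where you genuinely diverge is in the endgame. The paper only establishes $(0,0)\in\omega(\xhat_0,\eps_0)$, via a nested $\omega$-limit-set argument (take a point of the limit set, flow it, use Assumption~\ref{assum:stab} to reach $\{0\}\times\R^n$, then detectability), and must therefore invoke a separate local asymptotic stability lemma --- proved through the linearization and the center manifold theorem, since $A(0)$ may have eigenvalues on the imaginary axis --- to trap the trajectory once it visits a neighborhood of the origin. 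You instead collapse the LaSalle invariant set $\Omega$ to $\{(0,0)\}$ outright: projecting onto the $\xhat$-factor gives a compact set invariant under $f$ in \emph{both} time directions (a point worth stating explicitly, since the maximum-of-$V_1$ argument needs the backward orbit to remain in the set and in $\D$), and the constancy of $V$ along trajectories of $\Omega$ combined with detectability kills the $\eps$-component. This yields convergence to $(0,0)$ directly, and you recover Lyapunov stability from arbitrarily small forward-invariant sets $D(\rho)\times\{V\leq R\}$, which exist for the fixed $\alpha$ by shrinking $R$ together with $\rho$. The net effect is a proof that dispenses entirely with the center manifold theorem and with the two-stage limit-set argument, at the cost of having to be slightly more careful about two-sided invariance of the projected limit set; both routes are sound.
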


We prove Theorem~\ref{thmain} in the next section.

\section{Proof}\label{sec:proof}

In this section, we suppose that Assumptions~\ref{assum:stab}, \ref{assum:diss} and \ref{assum:obs} hold.
The proof of Theorem~\ref{thmain} rely on the three following lemmas, that we state and prove in this section.

\subsection{Local asymptotic stability}\label{secloc}

\begin{lem}\label{lem:loc}
For all $\alpha>0$, \eqref{E:observer_system} is locally asymptotically stable at $(0, 0)$.
\end{lem}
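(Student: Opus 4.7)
The plan is to combine Lyapunov stability, derived from the dissipativity bound on $V(\eps)=\eps'P\eps$, with a LaSalle-type attractivity argument that uses the target detectability of $(C,A(0))$ to control the error subsystem. The remark following Theorem~\ref{thmain} already provides $\dot V\leq-2\alpha|C\eps|^2\leq 0$ along solutions of \eqref{E:observer_system} whenever $\xhat\in\D$, so $V$ is non-increasing and $\|\eps(t)\|$ is controlled by $\|\eps_0\|$ as long as the trajectory does not leave $\D$ in the first coordinate.

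For local Lyapunov stability of $(0,0)$, I would view the $\xhat$-equation as $\dot\xhat=f(\xhat)-\alpha P^{-1}C'C\eps(t)$, a persistent perturbation of the locally asymptotically stable vector field $f$ (Assumption~\ref{assum:stab}) by a term whose supremum norm is bounded by $O(\|\eps_0\|)$. A standard total-stability argument, together with a continuation on the interval where $\xhat$ remains in $\D$, then shows that small enough $|\xhat_0|$ and $|\eps_0|$ keep the trajectory in a prescribed neighborhood of $(0,0)$ for all positive times.

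For attractivity, such initial conditions confine the trajectory to a compact set $K\subset\D\times\R^n$, and since $V$ is non-increasing on $K$, LaSalle's invariance principle applies: the $\omega$-limit set $\Omega$ of the trajectory is a compact invariant set contained in $\{C\eps=0\}$. On $\Omega$, \eqref{E:observer_system} reduces to the decoupled cascade $\dot\xhat=f(\xhat)$, $\dot\eps=A(\lambda(\xhat))\eps$; by Assumption~\ref{assum:stab} every orbit in $\Omega$ satisfies $\xhat(t)\to 0$, so $A(\lambda(\xhat(t)))\to A(0)$. Picking $L$ such that $A(0)-LC$ is Hurwitz (which exists by Assumption~\ref{assum:obs}) and using $C\eps\equiv 0$, the error equation on $\Omega$ rewrites as $\dot\eps=(A(0)-LC)\eps+(A(\lambda(\xhat(t)))-A(0))\eps$, and a quadratic Lyapunov function associated with $A(0)-LC$ together with the vanishing of the perturbation $A(\lambda(\xhat(t)))-A(0)$ yields $\eps(t)\to 0$. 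Every orbit in $\Omega$ thus converges to $(0,0)$; hence $(0,0)\in\Omega$, and combined with the Lyapunov stability just proved this forces $\Omega=\{(0,0)\}$. The delicate step is this final vanishing-perturbation argument on $\Omega$, where the detectability assumption must be turned into an actual convergence of a time-varying linear error dynamics from only asymptotic information on the coefficient.
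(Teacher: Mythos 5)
Your proof is correct, but it takes a genuinely different route from the paper's. The paper linearizes \eqref{E:observer_system} at the origin: the linearization \eqref{E:linearized_observer_system} is block upper-triangular, the error block $A_0-\alpha P^{-1}C'C$ is shown to be Hurwitz by running your same Lyapunov/LaSalle/detectability reasoning on the \emph{linear} $\eps$-subsystem, the block $A_0$ has spectrum in the closed left half-plane by Assumption~\ref{assum:stab}, and the center manifold theorem (reduction principle, needed precisely because $f$ is only assumed LAS, so $A_0$ may have imaginary-axis eigenvalues and first-order analysis alone is inconclusive) transfers the asymptotic stability of $f$ on the invariant set $\{\eps=0\}$ to the full nonlinear cascade. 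You avoid both linearization and the center manifold theorem: Lyapunov stability comes from total stability of $f$ (converse Lyapunov function plus the $O(|\eps_0|)$ sup-norm bound on the injection term obtained from monotonicity of $V$, with the bootstrap keeping $\xhat$ in $\D$), and attractivity from LaSalle plus detectability. This is more elementary and self-contained, at the cost of essentially duplicating machinery the paper deploys elsewhere: your total-stability step is Lemma~\ref{lem:bound} localized near the origin, and your $\omega$-limit analysis is the argument of Lemma~\ref{lem:conv}. One simplification you could borrow from the latter: rather than the time-varying vanishing-perturbation argument with $A(0)-LC$, note that the $\omega$-limit set of any orbit of $\Omega$ lies in $\{0\}\times\R^n$, so $\Omega$ contains a point $(0,\eps_2)$; the orbit through it has $\Xhat\equiv 0$ (since $0$ is an equilibrium of $f$), hence $\dot\eps=A_0\eps$ with $C\eps\equiv 0$ exactly, and detectability applies directly without any perturbation estimate.
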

\begin{proof}
Let $A_0 = A(0)$.
Consider the linearization of \eqref{E:observer_system} at the origin:
\begin{equation}\label{E:linearized_observer_system}
\left\{
\begin{aligned}
&\dot{\xhat}= A_0\xhat - \alpha P^{-1}C'C\eps\\
&\dot{\eps}= \left(A_0 - \alpha P^{-1}C'C\right)\eps.
\end{aligned}
\right.  
\end{equation}
This system is upper triangular. Let us first focus on the $\eps$ part of the system. Consider the function $V:\eps \mapsto \eps'P\eps$. Then $V$ is a Lyapunov function for the $\eps$-subsystem. Indeed,
\begin{align}
    \frac{\diff V(\eps)}{\diff t}
    &= \eps'P\dot\eps + \dot\eps'P\eps\nonumber\\
    &= \eps'\left(P A_0 + A_0'P\right) \eps - 2\alpha \eps'C'C\eps\nonumber\\
    &\leq - 2\alpha |C\eps|^2\tag{by Assumption~\ref{assum:diss}}\\
    &\leq 0.\nonumber
\end{align}
We denote by $\omega(\eps_0)$ the $\omega$-limit set of the the $\eps$-subsystem with initial condition $\eps_0\in\R^n$.
Then, by LaSalle's invariance principle, $\omega(\eps_0) \subset \{\eps_0\in\R^n\mid C\eps\equiv 0\}$. Since the pair $(C, A_0)$ is detectable by Assumption~\ref{assum:obs}, we have $\eps\to0$. Since the system is linear, this implies that all eigenvalues of $A_0-\alpha P^{-1}C'C$ have negative real part.
Now let us consider the $\xhat$-subsystem.
Since $0$ is asymptotically stable for the vector field $f$, all the eigenvalues of $A_0$ have non-positive real part.
Moreover, $\{\eps_0\in\R^n\mid C\eps\equiv 0\}$ is invariant under the dynamics of the $\xhat$-subsystem.
Then, applying the center manifold theorem (see \emph{e.g.} \cite[Appendix, Theorem 4.2]{Gauthier_book}), \eqref{E:observer_system} is locally asymptotically stable at $0$.
\end{proof}

\subsection{All trajectories are bounded}

\begin{lem}\label{lem:bound}
For all compact set $K_1\times K_2\subset\D\times\R^n$, there exists $\alpha_0>0$ such that for all $\alpha\in(0, \alpha_0)$, all the trajectories of \eqref{E:observer_system} with initial conditions in $K_1\times K_2$ remain in a compact subset of $\D\times\R^n$.
\end{lem}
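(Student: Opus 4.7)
The plan is to bound $|\eps|$ via the dissipativity Lyapunov function $V(\eps)=\eps'P\eps$ as long as $\xhat$ stays in $\D$, then to treat the coupling $-\alpha P^{-1}C'C\eps$ in the $\xhat$-equation as a uniformly small perturbation of the nominal closed-loop field $f:x\mapsto A(\lambda(x))x+B(\lambda(x))$, and finally to invoke robustness of the asymptotic stability of $0$ for $f$ to trap $\xhat$ in a compact sublevel set inside $\D$ whenever $\alpha$ is small enough.

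First I would extract a uniform bound on $|\eps|$: the computation in the remark following Assumption~\ref{assum:diss} gives $\dot V(\eps)\leq -2\alpha|C\eps|^2\leq 0$ on any time interval over which $\xhat(s)\in\D$. Hence $V(\eps(t))\leq\max_{\eps_0\in K_2}V(\eps_0)$, and there exists a constant $M$ depending only on $P$ and $K_2$ such that $|\eps(t)|\leq M$ for as long as $\xhat$ has remained in $\D$ up to time $t$.

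Next I would build a forward-invariant compact subset of $\D$ for the perturbed $\xhat$-dynamics. By Assumption~\ref{assum:stab} together with a Kurzweil-type converse Lyapunov theorem for an asymptotically stable equilibrium on an open basin, there exists a smooth $W:\D\to\R_+$ with $\nabla W\cdot f<0$ on $\D\setminus\{0\}$ whose sublevel sets $\{W\leq c\}$ are compact in $\D$. I would pick $c_1>0$ with $K_1\subset\{W<c_1\}$, then $c_2>c_1$, and set $\Omega=\{W\leq c_2\}$, $\eta=\min_{c_1\leq W\leq c_2}(-\nabla W\cdot f)>0$ and $N=\max_{W\leq c_2}|\nabla W|<\infty$. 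Then I would choose $\alpha_0>0$ so that $\alpha_0\|P^{-1}C'C\|M\leq\eta/(2N)$. For $\alpha\in(0,\alpha_0)$, while $\xhat\in\Omega\subset\D$ the bound $|\eps|\leq M$ applies and the perturbation of $f$ in the $\xhat$-equation is majorised by $\eta/(2N)$ pointwise, which yields
\begin{equation*}
\dot W(\xhat)=\nabla W(\xhat)\cdot(f(\xhat)-\alpha P^{-1}C'C\eps)\leq -\eta+\tfrac{\eta}{2}=-\tfrac{\eta}{2}<0
\end{equation*}
on the annulus $\{c_1\leq W\leq c_2\}$. A first-exit-time argument on $\Omega$ then prevents $\xhat$ from ever crossing $\{W=c_2\}$ outward, so $\xhat(t)\in\Omega$ and $|\eps(t)|\leq M$ for every $t\geq 0$, and the full trajectory lives in the compact set $\Omega\times\overline{B}(0,M)\subset\D\times\R^n$.

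The main obstacle is the circularity of the two bounds: the $\eps$-bound presupposes $\xhat\in\D$, while the forward invariance of $\Omega\subset\D$ for $\xhat$ uses the $\eps$-bound to size the perturbation. The Lyapunov-sublevel-set barrier argument is precisely what closes this loop, and the smallness of $\alpha_0$ has to be calibrated against $\eta$, $N$ and $M$. Beyond standard Lyapunov manipulations, the only non-elementary ingredient is the existence of a smooth proper Lyapunov function on the open basin $\D$, i.e.\ the Kurzweil converse Lyapunov theorem; if one wished to avoid it, one would have to construct $\Omega$ directly from the flow of $f$ via uniform convergence on compact sets, which is possible but more cumbersome.
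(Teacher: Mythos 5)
Your proof is correct and follows essentially the same route as the paper's: bound $\eps$ via the dissipativity Lyapunov function $V(\eps)=\eps'P\eps$, take a converse Lyapunov function $W$ for $f$ on the basin $\D$, and run a barrier argument on a compact sublevel set of $W$ containing $K_1$, with $\alpha_0$ calibrated so the coupling term cannot push $\xhat$ across the level set. Your explicit first-exit-time argument on the annulus is in fact a slightly more careful rendering of the circularity that the paper resolves implicitly by evaluating $\frac{\diff}{\diff t}W(\xhat)$ only on $\partial D(\rho)$.
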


\begin{proof}
Let $K_1\times K_2\subset\D\times\R^n$ be a compact set.
Let $R = \mu_{\max}\sup_{K_2}V < +\infty$, where $\mu_{\max}$ denotes the largest eigenvalue of $P$.
According to Assumption~\ref{assum:stab}, there exists a strict Lyapunov function $W$ for the vector field $f$.
For all $r>0$, set $D(r) = \{x\in\R^n\mid W(x) < r\} \subset\D$ and denote by $\partial D(r)$ its boundary.
Let $\rho>0$ be such that $K_1\subset D(\rho)$
and the closure of $D(\rho)$ lies in $\D$.
Set $M_1 = \sup_{\partial D(\rho)} L_f W < 0$
and $M_2 = 1+ \sup_{\partial D(\rho)} |\nabla W| < +\infty$ where $L_f$ denotes the usual Lie derivative along $f$ and $\nabla$ stands for the Euclidean gradient.

Let $\alpha_0 = \frac{-M_1}{RM_2|P^{-1}||C|^2}>0$ and take $\alpha\in(0,\alpha_0)$.
Take $(\xhat_0, \eps_0)\in K_1\times K_2$ and denote $(\xhat, \eps)$ the semi-trajectory of \eqref{E:observer_system} starting from $(\xhat_0, \eps_0)$.
Since $V:\eps\mapsto\eps'P\eps$ is a Lyapunov function for the $\eps$-subsystem of \eqref{E:observer_system}, we have $|\eps|\leq R$. Assume there exists $t>0$ such that $W(\xhat(t))=\rho$.
Then
\begin{align*}
    \frac{\diff}{\diff t}W(\xhat(t))
    &= L_f W (\xhat(t)) - \alpha
    \left(\nabla W(\xhat(t))\right)'
    P^{-1}C'C\eps(t)\\
    &\leq M_1 + \alpha M_2 |P^{-1}||C|^2 R \\ %|\eps_0|\\
    &<0.
\end{align*}
Hence $\xhat(t)\in D(\rho)$ for all $t\geq0$.
Thus, for all $\alpha\in(0,\alpha_0)$, all the trajectories of \eqref{E:observer_system} with initial conditions in $K_1\times K_2$ remain in a compact subset of $\D\times\R^n$.

\end{proof}

\subsection{All trajectories converge to $0$.}\label{secconv}

\begin{lem}\label{lem:conv}
For all compact set $K_1\times K_2\subset\D\times\R^n$, there exists $\alpha_0>0$ such that for all $\alpha\in(0, \alpha_0)$, all the trajectories of \eqref{E:observer_system} with initial conditions in $K_1\times K_2$ converge to $(0, 0)$.

\end{lem}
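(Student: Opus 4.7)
The plan is to apply LaSalle's invariance principle to the autonomous system~\eqref{E:observer_system}. By Lemma~\ref{lem:bound}, for every $\alpha$ below the threshold of that lemma, any trajectory $(\xhat(\cdot),\eps(\cdot))$ starting in $K_1\times K_2$ remains in a compact subset of $\D\times\R^n$, so its $\omega$-limit set $\Omega$ is non-empty, compact, invariant under the flow, and contained in $\D\times\R^n$. The Lyapunov-like function $V(\eps)=\eps'P\eps$ satisfies $\dot V\leq -2\alpha|C\eps|^2$ along trajectories by Assumption~\ref{assum:diss}; since $V\geq 0$, $V$ converges along the trajectory and LaSalle's invariance principle yields
\[
\Omega \subset \{(\xhat,\eps)\in\D\times\R^n : C\eps=0\}.
\]
On $\Omega$ the correction term $-\alpha P^{-1}C'C\eps$ vanishes, so the $\xhat$-component obeys the reduced dynamics $\dot\xhat=f(\xhat)$.

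The next step is to show that every orbit contained in $\Omega$ converges to $(0,0)$. The $\xhat$-projection of $\Omega$ is a compact subset of $\D$, so Assumption~\ref{assum:stab} forces $\xhat(t;p)\to 0$ for every $p\in\Omega$. Given such $p$, I extract a subsequential limit $\eps(t_n;p)\to\eps^\star$; by closedness of $\Omega$ the point $(0,\eps^\star)$ lies in $\Omega$. The $\Omega$-orbit through $(0,\eps^\star)$ still satisfies $C\eps\equiv 0$, so $\dot\xhat\equiv 0$ and the orbit keeps $\xhat\equiv 0$. Consequently $\eps$ obeys the purely linear equation $\dot\eps=A(0)\eps$ under the constraint $C\eps\equiv 0$. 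Differentiating this identity repeatedly places $\eps(t)$ in the unobservable subspace $\mathcal{N}=\bigcap_{k\geq 0}\ker(CA(0)^k)$, on which $A(0)$ is Hurwitz by Assumption~\ref{assum:obs}. Since $\Omega$ is invariant the orbit extends to all $t\in\R$ and remains bounded, and a Hurwitz linear flow admits no nonzero bounded bi-infinite orbit; hence $\eps^\star=0$, so every subsequential limit of $\eps(t;p)$ is $0$.

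Combined with $\xhat(t;p)\to 0$, this gives $(\xhat(t;p),\eps(t;p))\to(0,0)$ for every $p\in\Omega$, and in particular $(0,0)\in\Omega$. The original trajectory therefore visits every neighborhood of $(0,0)$; Lemma~\ref{lem:loc} provides an open basin of attraction of $(0,0)$ which the trajectory eventually enters, yielding $(\xhat(t),\eps(t))\to(0,0)$ as required.

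The main obstacle is the linear-algebraic step: one must carefully use backward invariance of $\Omega$ to exclude nonzero bounded bi-infinite orbits of $\dot\eps=A(0)\eps$ inside $\mathcal{N}$, and make sure that the ``twice-iterated'' closure argument inside the already-limit set $\Omega$ really delivers $(0,0)\in\Omega$ rather than mere accumulation of the outer trajectory near $(0,0)$, so that local asymptotic stability from Lemma~\ref{lem:loc} can be invoked to close the argument.
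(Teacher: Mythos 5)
Your proof is correct and follows essentially the same route as the paper: boundedness from Lemma~\ref{lem:bound}, LaSalle to confine the $\omega$-limit set $\Omega$ to $\{C\eps=0\}$, Assumption~\ref{assum:stab} to drive the $\xhat$-component of orbits in $\Omega$ to $0$, detectability to kill the residual $\eps$-dynamics, and Lemma~\ref{lem:loc} to convert $(0,0)\in\Omega$ into actual convergence. The one place you diverge is the detectability step: you argue backward in time, extending the orbit through $(0,\eps^\star)$ to a bounded bi-infinite orbit of $\dot\eps=A(0)\eps$ in the unobservable subspace and invoking the absence of nonzero bounded orbits for the Hurwitz restriction. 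The paper instead argues forward: $C\Eps(\cdot,0,\eps^\star)\equiv 0$ and detectability give $\Eps(t,0,\eps^\star)\to 0$, hence $(0,0)\in\omega(0,\eps^\star)\subset\Omega$ by nesting of $\omega$-limit sets. The forward version is slightly safer here, because your claim that the \emph{backward} orbit through $(0,\eps^\star)$ keeps $\xhat\equiv 0$ requires backward uniqueness of $\dot\xhat=f(\xhat)$ at the origin, and the paper only assumes forward uniqueness of solutions (local Lipschitzness is mentioned merely as a sufficient condition); with a continuous, non-Lipschitz $f$ a backward orbit could leave $0$, in which case $\eps$ would not obey the pure linear equation for $t<0$. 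Under the locally Lipschitz setting your argument is complete; otherwise, replace the bi-infinite-orbit step by the paper's forward iteration of $\omega$-limit sets.
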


\begin{proof}
For all initial conditions $(\xhat_0, \eps_0)\in\R^{2n}$, we denote $t\mapsto(\Xhat(t, \xhat_0, \eps_0), \Eps(t, \xhat_0, \eps_0))$ the semi-trajectory of \eqref{E:observer_system} with initial conditions $(\xhat_0, \eps_0)$.
Let $K_1\times K_2\subset\D\times\R^n$ be a compact set and $\alpha>\alpha_0>0$ as in Lemma~\ref{lem:bound}, so that all the trajectories of \eqref{E:observer_system} with initial conditions in $K_1\times K_2$ remain in a compact subset of $\D\times\R^n$.
Let $(\xhat_0, \eps_0)\in K_1\times K_2$.
We denote $(\xhat, \eps)$ the semi-trajectory of \eqref{E:observer_system} starting from $(\xhat_0, \eps_0)$, and $\omega(\xhat_0, \eps_0)$ the $\omega$-limit set of this semi-trajectory.

We  prove that $(\xhat, \eps)$ converges to $(0,0)$ as a consequence of Lemma~\ref{lem:loc}, by proving that the semi-trajectory enters the basin of attraction of $(0,0)$ in finite time.
%Since all trajectories are bounded according to Lemma 
It is sufficient to prove that $(0,0)\in \omega(\xhat_0, \eps_0)$ since this implies that $(\xhat, \eps)$ enters any open set containing $(0,0)$ in finite time.
We prove this in three steps: first $\omega(\xhat_0, \eps_0)\subset\{(\xhat_1, \eps_1)\in\D\times\R^n\mid C\Eps(\cdot, \xhat_1, \eps_1)\equiv0\}$, then $\omega(\xhat_0, \eps_0)\cap \left(\{0\}\times\R^n\right)\neq \emptyset$ and finally $(0,0)\in \omega(\xhat_0, \eps_0)$.
Recall that
$
\frac{\diff V(\eps)}{\diff t}
\leq - 2\alpha |C\eps|^2
$
by Assumption~\ref{assum:diss}.
Then, according to LaSalle's invariance principle, $\omega(\xhat_0, \eps_0)\subset \{(\xhat_1, \eps_1)\in\D\times\R^n\mid C\Eps(\cdot, \xhat_1, \eps_1)\equiv0\}$.

Let $(\xhat_1, \eps_1)\in\omega(\xhat_0, \eps_0)$.

The set $\omega(\xhat_0,\eps_0)$ is compact and invariant under the dynamics of the system, hence $\Xhat(t, \xhat_1, \eps_1)\in \omega(\xhat_0,\eps_0)$ for all $t\geq0$.
This further implies that  $\omega(\xhat_1, \eps_1)$ is a non-empty compact subset of $\omega(\xhat_0, \eps_0)$. 
As a consequence of  Assumption~\ref{assum:stab}, $\Xhat(t, \xhat_1, \eps_1) \to 0$ as $t\to+\infty$. 
Hence $\omega(\xhat_1, \eps_1) \subset \{0\}\times\R^n$ and thus
$$
\omega(\xhat_0, \eps_0)\cap \left(\{0\}\times\R^n\right)\neq \emptyset.
$$

Then there exists $\eps_2\in\R^n$
such that $(0,\eps_2)\in \omega(\xhat_0, \eps_0)\subset \{(\xhat_1, \eps_1)\in\D\times\R^n\mid C\Eps(\cdot, \xhat_1, \eps_1)\equiv0\}$.
%Then $\Xhat(\cdot, 0, \eps_2)$ satisfy \eqref{E:observation_system} with $u=\lambda(x)$.
Hence $\Xhat(\cdot, 0, \eps_2)\equiv0$.
%Then $\Eps(\cdot, 0, \eps_2)$ follows \eqref{E:observer_system} with $u = \lambda(0)=0$ and $y \equiv0$.
Then $\Eps(\cdot, 0, \eps_2)$ is solution of
\begin{equation}
\dot\eps = A_0\eps,\qquad
C\eps= 0.
\end{equation}
Since the pair $(C, A_0)$ is detectable (by Assumption~\ref{assum:obs}), $\Eps(\cdot, 0, \eps_2)\to0$. Hence $\{(0, 0)\}=\omega(0, \eps_2)\subset \omega(\xhat_0, \eps_0)$.
By local asymptotic stability of $(0,0)$, it follows that the semi-trajectory $(\xhat, \eps)$ converges towards $0$.
\end{proof}

\begin{proof}(Proof of Theorem~\ref{thmain})
Combining stability from Lemma~\ref{lem:loc} and semi-global convergence towards $(0,0)$ from Lemma~\ref{lem:conv}, we get the result.
\end{proof}

\section{Examples and applications}\label{sec:appli}

In this section, we provide some examples and applications to illustrate the main Theorem~\ref{thmain}.

\begin{rem}
If $A(u) = (J(u)-R(u))\HH$ for some positive definite matrix $\HH$ and positive semi-definite (resp. skew-symmetric) matrix $R(u)$ (resp. $J(u)$), $B$ is linear and $C = B'\HH$, then we recognize an input-state-output port-Hamiltonian system (see \emph{e.g.} \cite{van2014port}). In that case, a static output stabilizing feedback is given by $u= -ky$ for any $k>0$.
However, for the same dynamics with a different linear output (\emph{i.e.} such that $C\neq B'\HH$), our result provides a methodology for semi-global dynamic output feedback stabilization when the pair $(C, A(0))$ is detectable. The following examples are of this form.
\end{rem}

\begin{exmp}[Harmonic oscillator]
Consider \eqref{E:observation_system} with
$$A(u) = \begin{pmatrix}0&-(1+u)\\1+u&0\end{pmatrix},\ B(u) = \begin{pmatrix}u\\0\end{pmatrix}\text{ and }C = \begin{pmatrix}0&1\end{pmatrix}.$$
Let $\lambda:\R^2\ni(x_1, x_2)\mapsto -x_1$. Then $W:\R^2\ni x\mapsto|x|^2$ is a Lyapunov function for the vector field $f:x\mapsto A(\lambda(x))x+B(\lambda)$.
Indeed, for any solution $x$ of \eqref{E:observation_system},
\begin{align*}
    \frac{\diff W(x)}{\diff t}
    = 2x'A(\lambda(x))x + 2x'B(\lambda(x)) = -2x_1^2
\end{align*}
since $A(u)$ is skew-symmetric for all $u\in\R$.
According to the LaSalle's invariance principle, the $\omega$-limit set of the trajectory is the largest positively invariant set contained in $\{x\in\R^2\mid x_1\equiv0\}$. Note that $\lambda\equiv0$ and $\dot x_1 = -x_2$ on this set. Then $x\to0$.
Hence $\lambda$ is a globally asymptotically stabilizing feedback law.
The Kalman observability matrix of the pair $(C, A(0))$ is the full rank matrix
\begin{align*}
\begin{pmatrix}
C\\CA(0)
\end{pmatrix}
=\begin{pmatrix}
0&1\\1&0
\end{pmatrix}.
\end{align*}
Hence $(C, A(0))$ is observable, and \emph{a fortiori} detectable.
Thus, all the Assumptions~\ref{assum:stab}, \ref{assum:diss} and \ref{assum:obs} are satisfied, and we may apply Theorem~\ref{thmain} to find a semi-globally asymptotically stabilizing dynamic output feedback:
for all compact set $K_1\times K_2\subset \R^n\times \R^n$, there exists $\alpha_0>0$ such that for all $\alpha\in(0, \alpha_0)$, $(0, 0)$ is an asymptotically stable equilibrium point with basin of attraction containing $K_1\times K_2$ of \eqref{E:observer_system}.
\end{exmp}

\begin{exmp}[\'Cuk converter]\label{ex:cuk}
The averaged model of the \'Cuk converter given in Figure~\ref{fig:circuit} can be written as follow:
\begin{align}\label{E:circuit}
\dot x
=
\begin{pmatrix}0&-(1-u)&0&0\\1-u&0&u&0\\0&-u&0&-1\\0&0&1&-\frac{1}{R}\end{pmatrix}
P
x
+
\begin{pmatrix}E\\0\\0\\0\end{pmatrix}
\end{align}
where $x_1$ and $x_3$ are the fluxes in the inductances $L_1$ and $L_3$, $x_2$ and $x_4$ are the charges in the capacitors $C_2$ and $C_4$, $R$ is the load resistance, $E$ is the voltage source and $P = \mathrm{diag}\begin{pmatrix}\frac{1}{L_1},&\frac{1}{C_2},&\frac{1}{L_3},&
\frac{1}{C_4}\end{pmatrix}$. As in \cite{Rodriguez}, the goal is to stabilize the system at
\begin{align*}
    x^* = \begin{pmatrix}
    \displaystyle\frac{L_1}{REV_d^2},&\displaystyle
    C_2V_d + E,&\displaystyle
    -\frac{L_3}{R}V_d,&\displaystyle
    -C_4V_d
    \end{pmatrix}'
\end{align*}
for some output capacitor voltage $V_d$, which is attained for $u^* = \frac{V_d}{V_d+E}$.
%As in \cite{pyrkin}, we assume that only $x_2$ and $x_3$ are measured,
Assume that only the charge $x_2$ is measured,
and address the problem of output feedback stabilization.
In order to match \eqref{E:circuit} and \eqref{E:observation_system}, we set $\bar{x} = x-x^*$ and $\bar{u} = u-u^*$. Then \eqref{E:circuit} can be rewritten as \eqref{E:observation_system} by replacing $x$ by $\bar{x}$ and $u$ by $\bar{u}$ and with
\begin{align*}
    &A(\bar{u}) = \begin{pmatrix}0&-(1-u^*-\bar{u})&0&0\\1-u^*-\bar{u}&0&u^*+\bar{u}&0\\0&-u^*-\bar{u}&0&-1\\0&0&1&-\frac{1}{R}\end{pmatrix}P,\\
    &B(\bar{u}) = \bar{u}b\text{ with } b = \begin{pmatrix}C_2x^*_2\\L_3x^*_3-L_1x^*_1\\-C_2x^*_2\\0\end{pmatrix}
    \ \text{and}\
    C = \begin{pmatrix}0,&1,&0,&0\end{pmatrix}.
\end{align*}
Remark that $u \equiv 1$ and $u \equiv 0$ renders \eqref{E:observation_system} unobservable, since the Kalman observability matrices of the pairs $(C, A(1-u^*))$ and $(C, A(-u^*))$ are not invertible. So the well-known results for dynamic output feedback stabilization of uniformly observable systems do not apply. Theorem~\ref{thmain} may overcome this difficulty.
It remains to check Assumptions~\ref{assum:stab}, \ref{assum:diss} and \ref{assum:obs}.

The system is dissipative since $PA(\bar{u}) + A(\bar{u})'P$ is negative semi-definite for all input $\bar{u}$.
The pair $(C, A(0))$ is observable, and \emph{a fortiori} detectable, since its Kalman observability matrix is full rank as soon as $u^*\neq1$ and $u^*\neq0$ \emph{i.e.} $E\neq0$ and $V_d\neq0$.
Consider the saturated feedback law $\lambda(\bar{x}) = \mathrm{sat}\left(-\beta b'P\bar{x}\right)$, where $\beta>0$ is a tuning parameter and $\mathrm{sat}$ is a saturation function such that $u^* + \lambda$ lies in $(0, 1)$, which is always possible since $u^*\in(0, 1)$.
Then $x\mapsto x'Px$ is a Lyapunov function of the vector field $f:x\mapsto A(\lambda(x))x+B(\lambda(x))$, and according to the LaSalle's invariance principle, the $\omega$-limit set of any trajectory is the largest positively invariant set contained in $\{x\in\R^2\mid b'Px\equiv0\}$, which gives $x\to0$ when $(b'P, A(0))$ is observable.
Hence, for almost all choice of parameters, $\lambda$ is a globally asymptotically stabilizing feedback law.
One may also choose any other locally asymptotically stabilizing feedback law, for example the one given in \cite{Rodriguez}.

Then, Theorem~\ref{thmain} applies, and \eqref{E:observer_system} gives a semi-globally asymptotically stabilizing dynamic output feedback.
In Figures~\ref{fig:simucuk} and \ref{fig:obscuk}, we provide numerical simulations for the following choice of parameters (as in \cite{Rodriguez}):
\begin{table}[h!]
\caption{Numerical values for the simulation of the \'Cuk converter}
\begin{center}
\begin{tabular}{|c|c|c|c|c}
\hline
$L_1$ & $C_2$ & $L_3$ & $C_4$\\
$10.9\,$mH & $22.0\,\mu$F & $10.9\,$mH & $22.9\,\mu$F\\
\hline
$R$ & $E$ & $V_d$ & $\beta$\\
$22.36\,\Omega$ & $12\,$V & $25\,$V & $10^{-4}$\\
\hline
\end{tabular}
\end{center}
\end{table}

For these values, the pair $(b', A(0))$ is observable, hence $\lambda$ is a stabilizing state feedback law.
We choose the initial conditions $x(0) = 0$ and $\xhat(0) = x^*$.
In Figure~\ref{fig:simucuk}, we plot the output voltage $\frac{x_4}{C_4}$ that we want to stabilize at $V_d$ for the state feedback law $\lambda$ and for the dynamic output feedback based on the Luenberger observer for $\alpha  = 1$, $\alpha=10$ and $\alpha=100$.
In Figure~\ref{fig:obscuk}, we plot the error between the actual state of the system and the observer for the same values of $\alpha$.
When $\alpha$ is larger, the observer converges faster to the state of the system.
For $\alpha=100$, $\xhat$ converges quickly to $x$, and then the dynamics of $x$ obtained via the dynamic output feedback is close to the one obtained via state feedback.
On the contrary $\alpha=1$ leads to a slow convergence of the observer. Then, the state dynamics is very close to the one with the constant control $u\equiv\lambda(\xhat(0))=u^*$
Finally, $\alpha = 10$ is a compromise between these two behaviours: the state dynamics is similar to the case where $\alpha=1$ at the beginning, and to the case where $\alpha=100$ at the end of the simulation.

\end{exmp}

\begin{figure}[h!]
\begin{center}
\includegraphics{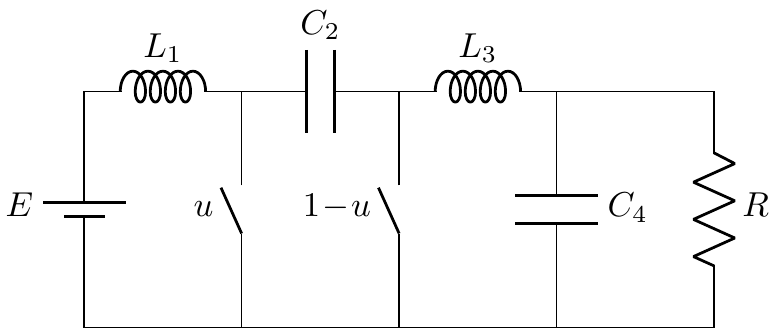}
\caption{Ideal \'Cuk converter.}
\label{fig:circuit}
\end{center}
\end{figure}
\begin{figure}
    \centering
    \includegraphics[width =0.6\linewidth]{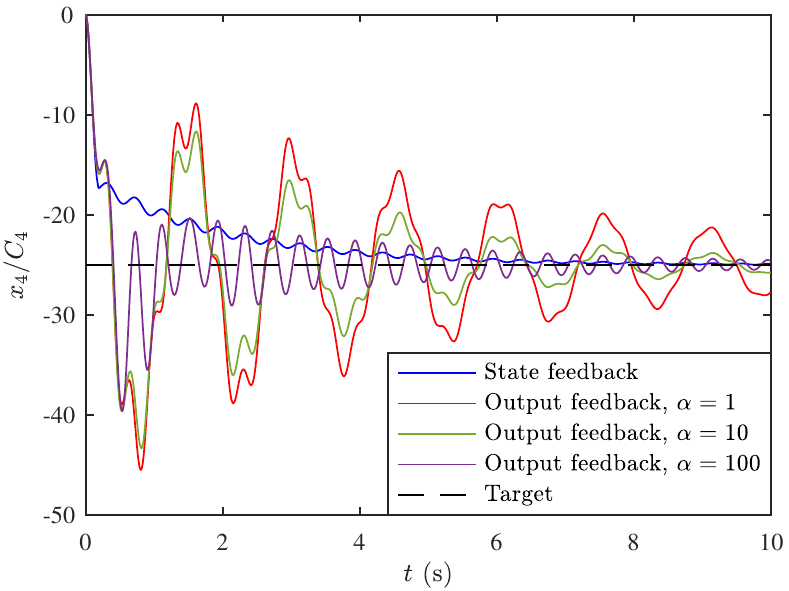}
    \caption{Output voltage of the \'Cuk converter with the state feedback law $\lambda$ and with the corresponding dynamic output feedback law based on the Luenberger observer for different values of $\alpha$.}
    \label{fig:simucuk}
\end{figure}
\begin{figure}
    \centering
    \includegraphics[width =0.6\linewidth]{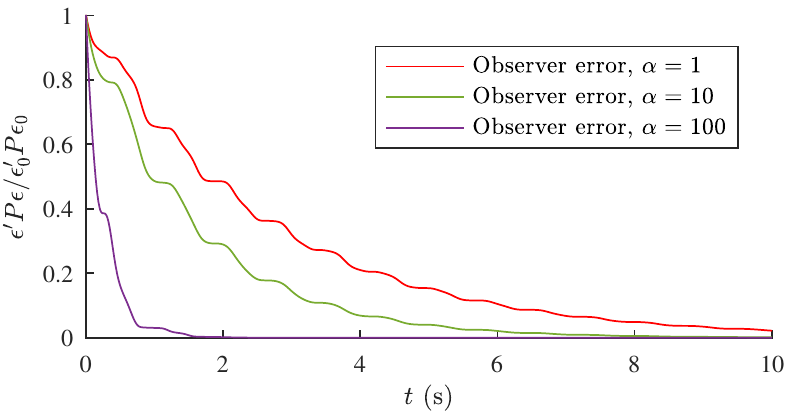}
    \caption{Evolution of the error between the actual state of the \'Cuk converter and the observer for different values of $\alpha$.}
    \label{fig:obscuk}
\end{figure}

\begin{rem}
The matrix $A(0)$ is Hurwitz for any $u^*\in(0, 1)$.
Hence, the constant control $\bar{u} = 0$ \emph{i.e.} $u=u^*$ stabilizes the system at the target point. This phenomenon is due to the load resistance $R$.
However, the user does not have any control on $R$, so this strategy potentially leads to a very slow stabilization.
Indeed, taking $R\to+\infty$ or $R\to0$, some eigenvalues of $A(0)$ converge to the imaginary axis.
In this case, the damping assignment state feedback is much more efficient, and that is why we build a dynamic output feedback based on this state feedback.
A similar remark holds for the next example.
\end{rem}

% \begin{figure}
% \begin{center}
% \begin{circuitikz}[x=.8cm,y=.6cm]
% \draw (0,0) to[L=$L_1$] (2,0) ;
% \draw (2,0) to[C=$C_2$] (4,0) ;
% \draw (4,0) to[L=$L_3$] (6,0) ;
% \draw (6,0) to[] (8,0) ;
% \draw (6,0) to[C=$C_4$] (6,-4) ;
% \draw (8,0) to[R=$R$] (8,-4) ;
% \draw (0,-4) to[] (8,-4) ;
% \draw (0,0) to[battery1,l_=$E$] (0,-4);
% \draw (2,-4) to[nos,l^=$u$] (2,0);
% \draw (4,-4) to[nos,l^=$1\!-\!u$] (4,0);
% \end{circuitikz}
% \caption{Ideal \'Cuk converter.}
% \label{fig:circuit}
% \end{center}
% \end{figure}

\begin{exmp}[Heat exchanger]
In \cite{zitte} (which we refer reader to for details), a model of a counter-current heat exchanger is introduced.
The system is 6-dimensional, and each component $x_i$ of the state represents the temperature of one exchanger's compartment.
After a change of coordinates and control (as in the previous Example~\ref{ex:cuk}),
the system can be rewritten in form of \eqref{E:observation_system} with
\begin{align*}
    &A(\bar{u}) = \begin{pmatrix}
    -k\Id_3+\gamma_1(u^*+\bar{u})J&k\Id_3\\
    k\Id_3&-k\Id_3+\gamma_2J'
    \end{pmatrix},\
    B(\bar{u}) = \bar{u}b&
    \\
&\text{with } b = \begin{pmatrix}E-\gamma_1x^*_1,&\gamma_1(x^*_1-x^*_2),&\gamma_1(x^*_2-x^*_3),&0,&0,&0\end{pmatrix}'&
\end{align*}
and $C = \begin{pmatrix}0,&0,&0,&1,&0,&0\end{pmatrix}$
where $\Id_3$ is the $3\times3$ identity matrix, $k, \gamma_1, \gamma_2, E$ are positive physical constants, and
$$J =
\begin{pmatrix}
-1&0&0\\
1&-1&0\\
0&1&-1
\end{pmatrix}.$$
With $G$ a positive physical constant of the system, each control $u^*>0$ leads to exactly one equilibrium state $x^*$
such that $A(0)x^* = \begin{pmatrix}
Eu^*,&0,&0,&0,&0,&G
\end{pmatrix}'$. The matrix $A(0)$ is invertible according to \cite{zitte}.
Again, this system is not uniformly observable.
Indeed, the determinant of the Kalman observability matrix of the pair $(C, A(\bar{u}))$ is $k^3\gamma_2^6(k^2-\gamma_1\gamma_2(\bar{u}+u^*))^3$. Hence, the constant input $\bar{u}\equiv\frac{k^2}{\gamma_1\gamma_2}-u^*$ renders \eqref{E:observation_system} unobservable.

\begin{figure}[h!]
    \centering
    \includegraphics[width =0.6\linewidth]{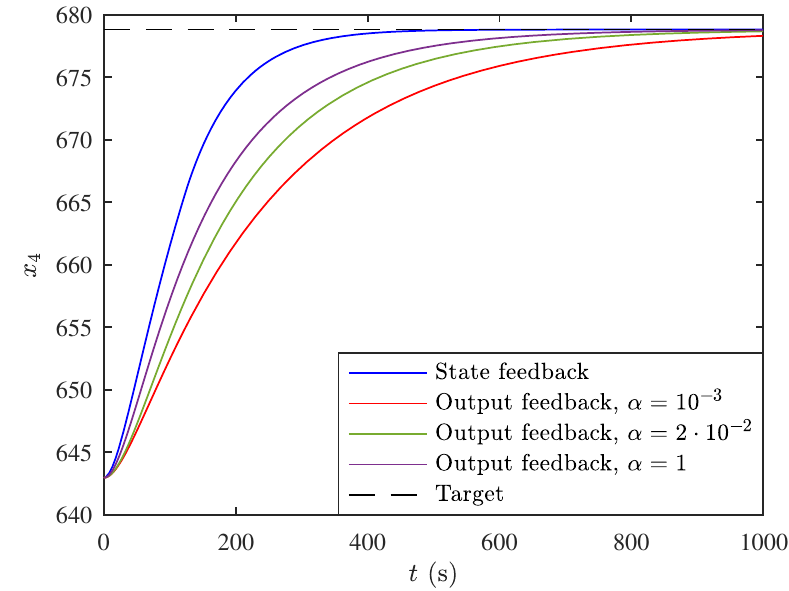}
    \caption{Output enthalpy of the heat exchanger with the state feedback law $\lambda$ and with the corresponding dynamic output feedback law based on the Luenberger observer for different values of $\alpha$.}
    \label{fig:simuheat}
\end{figure}
\begin{figure}[h!]
    \centering
    \includegraphics[width =0.6\linewidth]{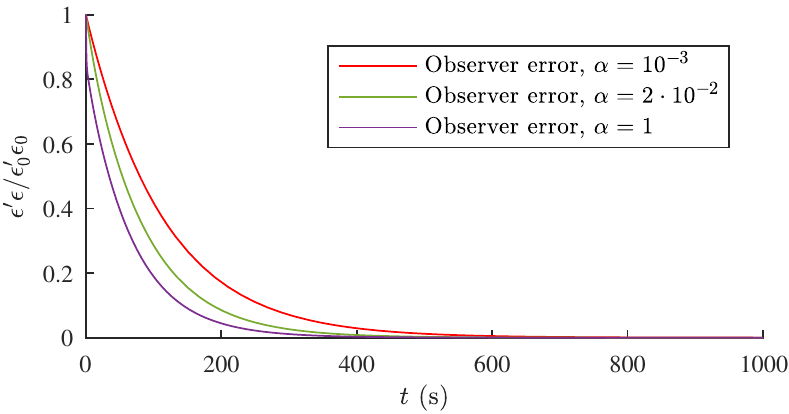}
    \caption{ Evolution of the error between the actual state of the heat exchanger and the observer for different values of $\alpha$.}
    \label{fig:obsheat}
\end{figure}

However, Theorem~\ref{thmain} may apply if Assumptions~\ref{assum:stab}, \ref{assum:diss} and \ref{assum:obs} are satisfied.
Choose $\lambda(\bar{x}) = \mathrm{sat}\left(-\beta b'\bar{x}\right)$, where $\beta>0$ is a tuning parameter and $\mathrm{sat}$ is a saturation function such that $u^* + \lambda$ lies in an interval $(0, u_M)$, which is always possible if $u^*\in(0, u_M)$.
If the pair $(b', A(0))$ is detectable, we apply the LaSalle's invariance principle to the Lyapunov function $x\mapsto x'x$, and get that $x$ converge towards $0$.
Then, Assumption~\ref{assum:diss} is satisfied since $A(\bar{u}) + A(\bar{u})'$ is negative definite when $u^*+\bar{u}>0$ according to the Gershgorin circle theorem.
The pair $(C, A(0))$ is observable, and \emph{a fortiori} detectable, if and only if $u^*\neq\frac{k^2}{\gamma_1\gamma_2}$.
We fix the following parameters, that satisfy all the previous assumptions.

\begin{table}[h!]
\caption{Numerical values for the simulation of the heat exchanger}
\begin{center}
\begin{tabular}{|c|c|c|c|c}
\hline
$k$ & $\gamma_1$ & $\gamma_2$ & $E$\\
$1.20\cdot10^{-2}$ $\mathrm{s}^{-1}$ & $5.06\cdot10^{-1}$ $\mathrm{kg}^{-1}$ & $1.00\cdot10^{-2}$ $\mathrm{s}^{-1}$ & $360$ K\\
\hline
$G$ & $u_M$ & $u^*$ & $\beta$\\
$300$ K & $0.05$ $\mathrm{kg}\cdot\mathrm{s}^{-1}$ & $0.5u_M$ & $1$\\
\hline
\end{tabular}
\end{center}
\end{table}
Set $\xhat(0) = x^*$, and let $x(0)$ be the steady state that corresponds to the constant input $u\equiv0.17u_M$.
Then Theorem~\ref{thmain} build a dynamic output feedback based on $\lambda$ and a Luenberger observer.
In Figure~\ref{fig:simuheat}, we plot the evolution of the output $x_4$ (that we intend to stabilize as in \cite{zitte}) for the state feedback law $\lambda$ and for the dynamic output feedback based on the observer for $\alpha = 10^{-3}$, $\alpha=2\cdot10^{-2}$ and $\alpha=1$. The error between the state and the observer is given in Figure \ref{fig:obsheat} for the same values of $\alpha$.
As in Example~\ref{ex:cuk}, the convergence of the observer to the state of the system is faster when $\alpha$ is larger, and then the stabilization of the state with dynamic output feedback gets closer to the one obtained by state feedback.
\end{exmp}

\section{Conclusion}

We have shown that for dissipative systems, target detectability is a sufficient condition to move from a locally asymptotically stabilizing state feedback to a semi-globally asymptotically stabilizing dynamic output feedback.
In particular, we are able to remove the classical uniform observability assumption.
Therefore, this work lays foundations for more general strategies in output feedback stabilization of non-uniformly observable systems, notably via embeddings into dissipative systems.

\section{Addendum: Global stabilization}

By allowing the observer gain to depend on the output of the system, it is actually possible to obtain a global version of Theorem~\ref{thmain} instead of a semi-global one.

\begin{thm}\label{thmain_glob}
If Assumptions~\ref{assum:stab}, \ref{assum:diss} and \ref{assum:obs} hold,
then there exists a locally Lipschitz function $\alpha: \R^n\times\R^p\to\R_+$ such that $(0, 0)$ is an asymptotically stable equilibrium point with basin of attraction containing $\D\times\R^n$ of
\begin{equation}\label{E:observer_system_glob}
\left\{
\begin{aligned}
&\dot{\xhat}= A(\lambda(\xhat))\xhat
+ B(\lambda(\xhat))
- \alpha(\xhat, C\eps) P^{-1}C'C\eps\\
&\dot{\eps}= \left(A(\lambda(\xhat)) - \alpha(\xhat, C\eps) P^{-1}C'C\right)\eps.
\end{aligned}
\right.
\end{equation}
\end{thm}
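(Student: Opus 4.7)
The plan is to mirror the three-step architecture of the proof of Theorem~\ref{thmain} (local asymptotic stability, global boundedness, LaSalle-based convergence), but with the state-dependent gain $\alpha(\xhat, C\eps)$ chosen so as to trap $\xhat(\cdot)$ inside its initial sublevel set of a fixed strict Lyapunov function for $f$. The first step is to invoke Assumption~\ref{assum:stab} and a converse Lyapunov theorem for asymptotic stability on the open basin $\D$ to produce a smooth strict Lyapunov function $W:\D\to\R_+$ for $f$ with $W(0)=0$, $L_f W<0$ on $\D\setminus\{0\}$, and $W(\xhat)\to+\infty$ as $\xhat\to\partial\D$. The sublevel sets $D(\rho)=\{W<\rho\}$ are then relatively compact in $\D$ and exhaust $\D$.

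Next, we design $\alpha$ so that along any trajectory of \eqref{E:observer_system_glob},
\[
\frac{d}{dt}W(\xhat)=L_fW(\xhat)-\alpha(\xhat,C\eps)\,(\nabla W(\xhat))'P^{-1}C'C\eps\leq \tfrac{1}{2}L_fW(\xhat)\leq 0.
\]
A natural candidate is
\[
\alpha(\xhat,y)=\frac{\alpha^*}{1+\Phi(\xhat)\,|y|^2},
\]
where $\alpha^*>0$ is fixed and $\Phi:\D\to\R_+$ is a locally Lipschitz majorant of $4\alpha^{*2}|(\nabla W(\xhat))'P^{-1}C'|^2/|L_fW(\xhat)|^2$, suitably regularized near $\xhat=0$ and extended outside $\D$. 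A Young-type bound of the form $|y|/(1+\Phi|y|^2)\leq 1/(2\sqrt{\Phi})$ then delivers the required inequality, so that $W(\xhat(t))\leq W(\xhat_0)$ and $\xhat(t)\in\overline{D(W(\xhat_0))}\subset\D$ for all $t\geq 0$. Meanwhile Assumption~\ref{assum:diss} still gives $\frac{d}{dt}V(\eps)\leq -2\alpha(\xhat,C\eps)|C\eps|^2\leq 0$, so $V(\eps(t))\leq V(\eps_0)$ and every trajectory issued from $\D\times\R^n$ is forward complete and bounded.

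With boundedness in hand, the remaining arguments transcribe those of Theorem~\ref{thmain}. Local asymptotic stability at $(0,0)$ follows from Lemma~\ref{lem:loc} applied to the linearization of \eqref{E:observer_system_glob}, which at the origin coincides with \eqref{E:linearized_observer_system} for the constant gain $\alpha(0,0)=\alpha^*>0$. For convergence, we reproduce the LaSalle-and-center-manifold argument of Lemma~\ref{lem:conv}: since $\alpha$ is bounded below by a positive constant on compact subsets of $(\D\times\R^n)\setminus\{(0,0)\}$, LaSalle's principle applied to $V$ gives $\omega(\xhat_0,\eps_0)\subset\{(\xhat_1,\eps_1)\mid C\Eps(\cdot,\xhat_1,\eps_1)\equiv 0\}$; Assumption~\ref{assum:stab} then yields $\omega(\xhat_0,\eps_0)\cap(\{0\}\times\R^n)\neq\emptyset$; and Assumption~\ref{assum:obs} finally places $(0,0)$ in $\omega(\xhat_0,\eps_0)$. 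Local asymptotic stability completes the argument on all of $\D\times\R^n$.

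The main obstacle is the explicit construction of the modulation $\Phi$ (equivalently, of a locally Lipschitz $\alpha$ with $\alpha(0,0)=\alpha^*>0$) that simultaneously validates the pointwise inequality on all of $\D\setminus\{0\}$, remains uniformly positive near the origin, and degenerates gracefully as $\xhat\to\partial\D$. The naive pointwise ratio $|(\nabla W)'P^{-1}C'|^2/|L_fW|^2$ may blow up both near $\partial\D$ and near $0$ when the linearization of $f$ at $0$ is not Hurwitz; circumventing this requires a careful (possibly piecewise) choice of $W$ together with a partition-of-unity gluing to produce a genuinely locally Lipschitz $\Phi$. All the subsequent steps are routine transpositions of the semi-global argument.
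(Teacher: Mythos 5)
Your architecture is the right one and matches the paper's (converse Lyapunov function on $\D$, trap $\xhat$ in a sublevel set, reuse the three lemmas of the semi-global proof), but the step you yourself flag as ``the main obstacle'' --- the actual construction of a locally Lipschitz, everywhere positive gain --- is the entire content of the theorem, and your candidate cannot be completed as stated. The pointwise inequality $\frac{\diff}{\diff t}W(\xhat)\le\frac12 L_fW(\xhat)$ on all of $\D\setminus\{0\}$ is unattainable with a gain satisfying $\alpha(0,0)=\alpha^*>0$: near the origin the correction term $\alpha\,(\nabla W(\xhat))'P^{-1}C'C\eps$ is of order $|\nabla W(\xhat)|\sim|\xhat|$ for fixed $C\eps\neq0$, while $|L_fW(\xhat)|$ is of order $|\xhat|^2$ (quadratic Lyapunov function, exponentially stable case), so the required modulation $\Phi\gtrsim|(\nabla W)'P^{-1}C'|^2/|L_fW|^2$ necessarily blows up at $0$ and your $\alpha(\xhat,y)=\alpha^*/(1+\Phi(\xhat)|y|^2)$ is then not even continuous at points $(0,y)$ with $y\neq0$. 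No regularization of $\Phi$ near $0$ can rescue the inequality there; it genuinely fails.

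The fix, which is what the paper does, is to abandon the pointwise domination by $\frac12|L_fW|$ and only ask for decrease of $W$ outside a fixed sublevel set. Take $W$ from the converse Lyapunov theorem with the quantitative bound $\frac{\partial W}{\partial x}(x)f(x)\le -W(x)$, and choose
\[
\alpha(\xhat,y)=\frac{\max\{W(\xhat),1\}}{2\left(1+\left|\frac{\partial W}{\partial x}(\xhat)\right|\right)\left(1+\left|P^{-1}C'y\right|\right)},
\]
so that the injected term $k(\xhat,y)=-\alpha(\xhat,y)P^{-1}C'y$ satisfies $|k|\le\max\{W,1\}/\bigl(2(1+|\frac{\partial W}{\partial x}|)\bigr)$ uniformly in $y$ (the $y$-dependence in the denominator cancels the $|P^{-1}C'y|$ in $k$ rather than being handled by a Young inequality). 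Then $\frac{\partial W}{\partial x}(\xhat)(f+k)\le -W(\xhat)+\frac12\max\{W(\xhat),1\}$, which is $\le-\frac12 W(\xhat)$ on $\D\setminus D(1)$, yielding $W(\xhat(t))\le\max\{W(\xhat_0),1\}$ --- enough to keep $\xhat$ in a compact subset of $\D$. This $\alpha$ is locally Lipschitz and strictly positive everywhere, so your Steps~1 and~3 (linearization at the constant gain $\alpha(0,0)$, then LaSalle plus Assumptions~\ref{assum:stab} and~\ref{assum:obs}) go through exactly as you describe; note also that for the LaSalle step you only need $\alpha>0$ pointwise, not a uniform lower bound. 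In short: your boundedness target must be weakened from $W(\xhat(t))\le W(\xhat_0)$ to $W(\xhat(t))\le\max\{W(\xhat_0),1\}$, and the gain must absorb the output in its denominator rather than modulate by a ratio whose denominator vanishes at the target.
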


\begin{proof}
Let $\D$ and $f$ be as in Assumption~\ref{assum:stab}, and $P$ as in Assumption~\ref{assum:diss}.
For all $(\xhat, y)\in\R^n\times\R^p$, let $k(\xhat, y) = -\alpha(\xhat, y) P^{-1}C'y$.
According to the converse Lyapunov theorem (see e.g. \cite{teel2000smooth}), there exists a proper function $W\in C^\infty(\D,\R_+)$ such that $W(0)=0$ and
\begin{equation}\label{eq_Lyap}
\frac{\partial W}{\partial x}(x)f(x,\lambda(x)) \le -W(x),\quad \forall x\in\D.
\end{equation}
For all $r>0$, set $D(r) = \{x\in\R^n\mid V(x) \le r\} $ which is a compact subset of $\D$.
Let $\alpha:\R^n\times\R^p\to\R_+$ be the function defined for all $(\xhat, y)\in\R^n\times\R^p$ by
\begin{equation}
\alpha(\xhat, y) = 
-\frac{\max\{W(\hat x), 1\}}{2\left(1 + \left|\frac{\partial W}{\partial x}(\hat x)\right|\right)
\left(1+
\left|P^{-1}C'y\right|\right)}.
\end{equation}
Note that $\alpha$ is locally Lipschitz and $\alpha(\hat x, y)>0$ for all $(\xhat, y)\in\R^n\times\R^p$. Also, it yields
\begin{equation}
|k(\xhat, y)| \le \frac{\max\{W(\hat x),1\}}{2\left(1 + \left|\frac{\partial W}{\partial x}(\hat x)\right|\right)},\quad \forall(\xhat, y)\in\R^n\times\R^p.
\end{equation}

\noindent\textbf{Step 1: local asymptotic stability
}
Since $\alpha(0, 0)>0$, the linearization of \eqref{E:observer_system_glob} at $(0, 0)$ may be rewritten in the form of \eqref{E:linearized_observer_system} and the proof of local asymptotic stability follows from Section~\ref{secloc}.\\

\noindent\textbf{Step 2: all trajectories are bounded.
%along trajectories of the closed loop system.
}
Consider the function $V:\eps \mapsto \eps'P\eps$.
For all initial conditions $(\hat x_0, \eps_0)\in\D\times\R^n$, the 
solution of the closed-loop system \eqref{E:observer_system_glob} denoted $(\xhat(\cdot), \eps(\cdot))$ satisfies
\begin{align}
    \frac{\diff V(\eps)}{\diff t}
    &= \eps'P\dot\eps + \dot\eps'P\eps\nonumber\\
    &= \eps'\left(P A(\lambda(\xhat)) + A(\lambda(\xhat))'P\right) \eps - 2\alpha \eps'C'C\eps\nonumber\\
    &\leq - 2\alpha |C\eps|^2\tag{by Assumption~\ref{assum:diss}}\\
    &\leq 0.\nonumber
\end{align}
Hence, $\eps$ remains in a compact set.

Moreover, for all $(\hat x, y)\in\R^n\times\R^p$,
\begin{align*}
\frac{\partial W}{\partial x}(\xhat)[f(\hat x,\lambda(\hat x)) + k(\hat x, y)]
%\frac{\diff W(\xhat)}{\diff t}&=
&\le -W(\hat x) + \frac{\partial W}{\partial x}(\xhat)k(\hat x, y)\\
&\le -W(\hat x) + \left|\frac{\partial W}{\partial x}(\xhat)\right| |k(\hat x, y)|\\
&\le  -W(\hat x) + \left|\frac{\partial W}{\partial x}(\xhat)\right| \frac{\max\{W(\hat x),1\}}{2\left(1 + \left|\frac{\partial W}{\partial x}(\xhat)\right|\right)}
\\
&\le  -W(\hat x) + \frac{1}{2}\max\{W(\hat x),1\}.
%\\
%&\le  -\frac{1}{2}W(\hat x).
\end{align*}
Hence, if $\xhat\in \D\setminus D(1)$,
\begin{equation}
    \frac{\partial W}{\partial x}(\xhat)(f(\hat x,\lambda(\hat x)) + k(\xhat, y))\le -\frac{1}{2} W(\hat x).
\end{equation}
Thus
\begin{equation*}
W(\hat x)\le \max\{ W(\hat x_0), 1\},
%\frac{\diff W(\xhat)}{\diff t}\le  -\frac{1}{2} \rho(W(\hat x)).
%\\
%&\le  -\frac{1}{2}W(\hat x).
\end{equation*}
In other words, $\hat x$ remains in $D(1)\cup D(W(\hat x_0))$ which is a compact subset of $\D$.
Thus, solutions of \eqref{E:observer_system_glob} are complete in positive time.\\

\noindent\textbf{Step 3: all trajectories converge to $0$.}
Since all trajectories of \eqref{E:observer_system_glob} are bounded from Step 2, the proof of convergence is identical to Section~\ref{secconv}.

\end{proof}

\section*{Acknowledgments}
The authors would like to thank Bertrand Zitte, Boussad Hamroun, Fran\c{c}oise Couenne and
Daniele Astolfi (authors of \cite{zitte}) for many fruitful discussions about physical systems satisfying the assumptions under consideration.\\

This research was funded by the French Grant ANR ODISSE (ANR-19-CE48-0004-01).
\bibliographystyle{abbrv}
\bibliography{references}

\end{document}